\newtheorem{theorem}{Theorem}[section]
\newtheorem{corollary}[theorem]{Corollary}
\newtheorem{proposition}[theorem]{Proposition}
\theoremstyle{definition}
\theoremstyle{definition}
\theoremstyle{definition}
\theoremstyle{definition}
\theoremstyle{definition}
\theoremstyle{definition}
\newtheorem{definition}[theorem]{Definition}
\def\cleardoublepage{\clearpage\if@twoside \ifodd\c@page\else

  \hbox{}

  \vspace*{\fill}

    \vspace{\fill}

  \if@twocolumn\hbox{}\newpage\fi\fi\fi}
\begin{document}

\pagenumbering{roman}

\linespread{1.0}

\cleardoublepage

\pagestyle{myheadings}

\clearpage

\frontmatter

\newpage

\pagenumbering{arabic}
\begin{center}
\textbf{More Zero-Free Regions for Fractional Hypergeometric Zeta Functions}\\ \bigskip   Demessie Ergabus Birmechu and Hunduma Legesse Geleta \\
Addis Ababa University, Department of mathematics\\
 \textit{Email: dbirmechu@gmail.com\\ ~~~~~~~~~~~~~~~~~~hunduma.legesse@aau.edu.et}
 
\end{center}
\textbf{Abstract.}
Some zero free regions were known on the right-half of the complex plane in the form of vertical strips for fractional hypergeometric zeta functions. In this paper we describe and demonstrate zero free regions on the left-half of the complex plane for fractional hypergeometric zeta functions. The fractional hypergeometric zeta function of order $a$ has  no zeros to the left-half of the complex plane except the trivial zeros on the real axis.\\
\textcolor{blue}{\textbf{ Key Words and Phrases:}}\\
Riemann zeta function, Zero free regions, fractional hypergeometric zeta function
\section{\textcolor{blue}{Introduction}}$\label{I}$

In \cite{DH}, it was described and demonstrated that zero free region on the right half-plane $\mathcal{H}=\{\sigma+it\in\mathbb{C}: \sigma>1\}$ in the form of vertical strips $V_{a}=\{\sigma+it\in\mathbb{C}: 1\leq\sigma<2-a\}$ for fractional hypergeometric zeta functions $\zeta_{a}(s)$ and $V_{N}=\{1\leq\sigma<2\}$ for hypergeometric zeta functions $\zeta_{N}(s),$ where $``a"$ is a positive real number between $0$ and $1$ and $``N"$ is a natural number. In \cite{HGN}, it was shown that the fractional hypergeometric zeta functions $\zeta_{a}(s)$ is zero free for infinitely many positive real number $a$ in the neighborhood of 1. Specifically there is a positive number $\delta$ so that $\zeta_{a}(s)$ is zero free where the classical Riemann zeta function $\zeta(s)$ is zero free for $a\in (1-\delta, 1+\delta).$\\

 In \cite{HG, HL}, it was shown that $\zeta_{a}(s)$ has infinitely many simple poles at $1,~0,~-1,~-2, \ldots ,$ and infinitely many zeros at $1-a,-a,-(1+a),-(2+a),\ldots$ which they called trivial zeros for fractional hypergeometric zeta function. Observe that if $a>1,$ is fixed, then all the trivial zeros for fractional hypergeometric zeta function are on the negative real axis and if $0<a<1,$ then $1-a$ is the only positive trivial zero for fractional hypergeometric zeta function. The question whether or not these zeros are the only zeros of $\zeta_{a}(s)$ is unsettled. The authors in \cite{DH} in their conclusion part suggested that it is possible to extend zero-free regions for these families of zeta functions to both the right and left half of the complex plane with some evidence but no proof. In this paper motivated by their suggestions therefore,  we show that the fractional hypergeometric zeta functions have no zeros for $a>1$ on $\{\sigma+it: \sigma<1\}$ and for $0<a<1$ on $\{\sigma+it: \sigma<0\}$ except the trivial zeros aforementioned. For $0<a<1$ at present time we are not sure whether or not $1-a$ is the only zero on the critical strip of the Riemann zeta function $\{\sigma+it: 0<\sigma<1\}$ for the fractional hypergeometric zeta functions. To prove these results we use the method of analytic continuation for $\zeta_{a}(s)$ strip-by-strip as shown in \cite{HL} to the left-half of the complex plane, positivity results on oscillatory integrals and monotonicity of real-valued functions see \cite{Mu}.\\
 
 Throughout this paper we use the following notations for vertical strips between the poles of the fractional hypergeometric zeta function. For each natural number $n$ let us define the vertical strip $V_{n}$ as follows:\\
$$V_{n}=\{s\in\mathbb{C}:~ s=\sigma+it, t\in \mathbb{R}~\text{and}~1-n<\sigma<2-n\}$$
$$V_{n}^{+}=\{s\in\mathbb{C}:~ s=\sigma+it, t>0~\text{and}~1-n<\sigma<2-n\}$$
$$V_{n}^{-}=\{s\in\mathbb{C}:~ s=\sigma+it, t<0~\text{and}~1-n<\sigma<2-n\}$$
$$V_{n}=V_{n}^{+}\cup V_{n}^{-}\cup\mathbb{R}.$$ For each natural number $n$ let $F_{n, a}(s)$ be the analytic continuation of \\ $\displaystyle{\frac{\Gamma(s+a-1)}{\Gamma(a+1)}\zeta_{a}(s)=\int_{0}^{\infty}\frac{x^{s+a-2}}{a\gamma(a, x)e^{x}}dx}$ on $V_{n}.$ Then we observe that $$\Im\left(\frac{\Gamma(s+a-1)}{\Gamma(a+1)}\zeta_{a}(s) \right) =\Im\left(F_{n, a}(s) \right) $$ on $V_{n}.$ So to prove that $\zeta_{a}(s)$ has no zeros on $V_{n},$ except for the trivial zeros mentioned it is enough to show $\Im\left(F_{n, a}(s) \right)$ has no zeros on $V_{n}^{+}$ and $V_{n}^{-}.$ We also consider two cases for $a,$ when $0<a<1$ and $a>1.$ \\

 The main results of this paper are the following: 

\begin{theorem}\label{Tm1}
For fixed positive number $a\in(0, 1)$, $\zeta_{a}(s)$ has no zeros on $V_n$  except for infinitely many ``trivial" zeros on the left side of $\sigma=0$, one in each of the intervals $I_{n}=[-n, 1-n],$ where $n\in\mathbb{N}.$ 
\end{theorem}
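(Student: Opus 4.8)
\emph{Proof strategy.} The plan is to argue strip by strip, starting from the region near $\sigma=1$ (where $\zeta_a$ is already known to be zero free by \cite{DH,HGN}) and extending leftward through $V_2,V_3,\dots$ by the analytic continuation of \cite{HL}. Writing the integrand as $x^{s-2}\phi(x)$ with $\phi(x)=\frac{x^{a}e^{-x}}{a\gamma(a,x)}$, which is analytic at the origin with $\phi(0)=1$ and Taylor expansion $\phi(x)=\sum_{k\ge 0}c_k x^{k}$, I would subtract the first $n$ Taylor terms on $[0,1]$ to obtain, on $V_n$,
\[
F_{n,a}(s)=\int_{0}^{1}x^{s-2}\Big(\phi(x)-\sum_{k=0}^{n-1}c_{k}x^{k}\Big)dx+\int_{1}^{\infty}x^{s-2}\phi(x)\,dx+\sum_{k=0}^{n-1}\frac{c_{k}}{s-1+k}.
\]
This exhibits the simple poles at $s=1,0,\dots,2-n$. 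Since $0<a<1$, the factor $\Gamma(s+a-1)$ is finite and nonzero at every $s=1,0,-1,\dots$, so the poles of $\zeta_a$ coincide with those of $F_{n,a}$, while the trivial zeros of $\zeta_a$ in $V_n$ arise \emph{only} from the poles of $\Gamma(s+a-1)$, i.e.\ from the vanishing of $1/\Gamma(s+a-1)$ at the single point $s=2-n-a$ of the strip, where $F_{n,a}$ does not vanish. Consequently the theorem reduces to two claims on each $V_n$: that $F_{n,a}$ does not vanish off the real axis, and that $F_{n,a}$ keeps a constant sign on the real segment of the strip.

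For the off-axis claim I would follow the reduction recorded in the introduction, namely that it suffices to show $\Im F_{n,a}(\sigma+it)\ne0$ for $t\ne0$. Substituting $x=e^{u}$ and folding the subtracted terms (equivalently the rational terms $\sum_k c_k/(s-1+k)$, rewritten through $\int_0^\infty e^{bu}\sin(tu)\,du=t/(b^2+t^2)$ for $b=\sigma-1+k<0$ on $V_n$) back into the kernel, the imaginary part collapses to a single sine transform
\[
\Im F_{n,a}(\sigma+it)=\int_{-\infty}^{\infty}\widetilde{H}(u)\sin(tu)\,du=\int_{0}^{\infty}g_{n}(u)\sin(tu)\,du,
\]
where $\widetilde{H}(u)=e^{(\sigma-1)u}\big(\phi(e^{u})-\sum_{k=0}^{n-1}c_{k}e^{ku}\big)$ and $g_{n}(u)=\widetilde{H}(u)-\widetilde{H}(-u)$. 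The subtraction of the first $n$ Taylor terms is exactly what makes $\widetilde H$ integrable on all of $\mathbb{R}$ (both tails now decay). The aim is then to verify that $g_n$ is nonnegative and decreasing on $(0,\infty)$, so that the positivity criterion for oscillatory sine integrals in \cite{Mu} yields $\int_0^\infty g_n(u)\sin(tu)\,du>0$ for $t>0$, and the opposite sign for $t<0$; hence $\Im F_{n,a}\ne0$ on $V_n^{+}\cup V_n^{-}$ and $\zeta_a$ has no nonreal zeros in $V_n$.

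On the real segment $1-n<\sigma<2-n$ the value $F_{n,a}(\sigma)$ is real, and I would show it keeps a constant (positive) sign. On $V_1$ this is immediate from positivity of the integrand; on the continued strips it should follow from the monotonicity of $F_{n,a}(\sigma)$ in $\sigma$ together with its boundary behaviour as $\sigma$ approaches the neighbouring poles, using the real-variable monotonicity tools of \cite{Mu}. Granting $F_{n,a}(\sigma)\ne0$, the identity $\zeta_a(\sigma)=\Gamma(a+1)F_{n,a}(\sigma)/\Gamma(\sigma+a-1)$ shows that the only real zero of $\zeta_a$ in the strip is the simple zero produced by the pole of $\Gamma(\sigma+a-1)$ at $\sigma=2-n-a$. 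For $n\ge2$ this point lies in $I_{n-1}=[1-n,2-n]$, so reassembling the strips places exactly one trivial zero in each interval $I_n=[-n,1-n]$ on the left of $\sigma=0$, and no others, which is the assertion of the theorem.

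The main obstacle is the verification, uniform in $\sigma\in(1-n,2-n)$ and across all $n$, that $g_n$ is of one sign and monotone (and, in parallel, that $F_{n,a}$ keeps a constant sign on the real axis). The subtraction of the Taylor/pole terms can easily spoil naive monotonicity, so the delicate work is to control these terms using the signs of the coefficients $c_k$ --- which are themselves inherited from the positivity and monotonicity properties of $1/(a\gamma(a,x)e^{x})$ and its derivatives --- and to confirm that the resulting kernel $g_n$ satisfies the hypotheses of the oscillatory-integral positivity lemma. Everything else (convergence, the bookkeeping of the strip-by-strip continuation, and the location of the $\Gamma$-poles) is routine; the genuine difficulty is concentrated in this monotonicity estimate.
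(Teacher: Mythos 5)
Your proposal is a roadmap rather than a proof: the entire mathematical content of the theorem is concentrated in the two claims you explicitly defer, namely that the folded kernel $g_{n}(u)=\widetilde{H}(u)-\widetilde{H}(-u)$ is nonnegative and decreasing on $(0,\infty)$ so that the oscillatory-integral criterion of \cite{Mu} applies, and that $F_{n,a}(\sigma)$ keeps one sign on the real segment of each strip. Neither is verified, and neither is routine. After subtracting the Taylor polynomial of $\phi$, the sign of $\widetilde{H}(u)$ as $u\to-\infty$ is governed by the first omitted coefficient $c_{n}$ (since $\phi(e^{u})-\sum_{k<n}c_{k}e^{ku}\sim c_{n}e^{nu}$ there), and the coefficients of $\phi(x)=x^{a}e^{-x}/(a\gamma(a,x))$ are not of one sign (already $c_{0}=1>0$ while $c_{1}=-1/(a+1)<0$), so $g_{n}\geq 0$ cannot be read off and may in fact fail without a genuinely new estimate. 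Until that step is supplied, the argument establishes nothing beyond the region where the original integral converges.

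For comparison, the paper's own proof takes a much blunter route: it asserts that on every $V_{n}$ one still has $\Im F_{n,a}(\sigma+it)=\int_{0}^{\infty}\frac{x^{\sigma+a-2}}{a\gamma(a,x)e^{x}}\sin(t\ln x)\,dx$ with the original, unsubtracted kernel, observes that this kernel is positive and decreasing because $\sigma+a-2<0$, invokes the positivity result of \cite{Mu} directly, and finishes with the reflection principle for $t<0$; there is no Taylor subtraction, no kernel $g_{n}$, and no separate treatment of the real axis. Your setup actually exposes the weak point of that shortcut: for $\sigma<1$ the displayed integral diverges at $x=0$ (the integrand behaves like $x^{\sigma-2}$ there, since $a\gamma(a,x)\sim x^{a}$), so identifying $\Im F_{n,a}$ with that integral on $V_{n}$ is precisely the step your subtracted representation is designed to replace. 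In short, your decomposition and pole bookkeeping are the right starting point and are more careful than the paper's, but the theorem is not proved until the positivity and monotonicity of $g_{n}$ (and the constant sign of $F_{n,a}$ on the real segment) are actually established rather than announced as the remaining obstacle.
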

\begin{theorem}$\label{III1}$
For fixed positive real number $a>1,$ $\zeta_{a}(s)$ has no zeros on $V_n$  except for infinitely many ``trivial" zeros on the left side of $\sigma=0$, one in each of the intervals $I_{n}=[1-n, 2-n],$ where $n\in\mathbb{N}.$
\end{theorem}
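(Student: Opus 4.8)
The plan is to follow the strip-by-strip analytic continuation scheme described above, reducing the location of zeros of $\zeta_a(s)$ to a sign analysis of $\Im\bigl(F_{n,a}(s)\bigr)$ off the real axis together with a monotonicity argument on the real axis. Throughout I work on $V_n$ with the relation
\[
F_{n,a}(s)=\frac{\Gamma(s+a-1)}{\Gamma(a+1)}\,\zeta_a(s),
\]
the meromorphic continuation of $\displaystyle\int_0^\infty \frac{x^{s+a-2}}{a\gamma(a,x)e^x}\,dx$. For $a>1$ the poles of $\Gamma(s+a-1)$ sit exactly at the listed trivial zeros $1-a,-a,-(1+a),\dots$ of $\zeta_a$, so these cancel in the product; hence $F_{n,a}$ is analytic there and its zeros on $V_n$ coincide with the \emph{non}-trivial zeros of $\zeta_a$. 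It therefore suffices to prove that $F_{n,a}$ has no zeros on $V_n$ at all.

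First I would treat $V_n^{+}\cup V_n^{-}$. Writing $s=\sigma+it$ with $t\neq 0$ and splitting $x^{s+a-2}=x^{\sigma+a-2}\bigl(\cos(t\ln x)+i\sin(t\ln x)\bigr)$ gives, for $\sigma>1$,
\[
\Im\bigl(F_{n,a}(s)\bigr)=\int_0^\infty \frac{x^{\sigma+a-2}\sin(t\ln x)}{a\gamma(a,x)e^x}\,dx ,
\]
with the continuation to $V_n$ effected by expanding the analytic factor $x^{a}/\bigl(a\gamma(a,x)e^{x}\bigr)$ at $x=0$ and peeling off its first $n$ Taylor coefficients, which produces the explicit simple poles at $s=1,0,-1,\dots$ together with a remainder integral convergent on $V_n$. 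The goal is to show this oscillatory integral is strictly of one sign for each fixed $t>0$ (and of the opposite sign for $t<0$), so that $\Im(F_{n,a})\neq 0$ on $V_n^{+}\cup V_n^{-}$ and no zero of $\zeta_a$ can lie off the real axis. Following the positivity technique of \cite{Mu}, I would substitute $x=e^{u}$, group the resulting $u$-integral over consecutive half-periods of $\sin(tu)$, and show that the amplitude
\[
A(u)=\frac{e^{(\sigma+a-1)u}}{a\gamma(a,e^u)\,e^{e^u}}
\]
is eventually monotone, so that pairing adjacent half-periods yields a series of terms of constant sign.

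Next, on the real axis $F_{n,a}(\sigma)$ is real, so the imaginary-part criterion is vacuous and a direct argument is needed. Since every real zero of $\zeta_a$ must arise either from a zero of $F_{n,a}$ or from the factor $1/\Gamma(\sigma+a-1)$, I would establish monotonicity of the real integral on the open interval $(1-n,2-n)$, forcing $F_{n,a}(\sigma)\neq 0$ there; then each real zero of $\zeta_a$ in $V_n$ is compelled to be a pole of $\Gamma(\sigma+a-1)$, i.e.\ a trivial zero. Counting these, the condition $\sigma+a-1\in\{0,-1,-2,\dots\}$ places a point $\sigma=1-a-m$ in $I_n=[1-n,2-n]$ precisely when the interval $(n-1-a,\,n-a)$ contains a nonnegative integer $m$; for $a>1$ this admits exactly one such $m$ for every interval lying to the left of $\sigma=0$, recovering the listed trivial zeros one per interval and giving none in $V_1=(0,1)$, as required.

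The main obstacle I anticipate is the second step: proving the oscillatory integral does not vanish after continuation past $\sigma=1$. The difficulty is twofold. As $u\to-\infty$ (i.e.\ $x\to 0$) one has $a\gamma(a,x)\sim x^{a}$, so $A(u)\sim e^{(\sigma-1)u}$ blows up when $\sigma<1$; this is exactly the divergence tamed by subtracting the leading Taylor terms, and the sign argument must therefore be run on the tamed remainder rather than on the naive integrand. Moreover, the subtracted principal parts contribute $-t\sum_{k=0}^{n-1} c_k/\bigl((\sigma-1+k)^2+t^2\bigr)$ to the imaginary part, and one must verify strip by strip that these finite correction terms carry the same sign as the dominant half-period contribution, so that strict positivity propagates leftward. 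For $a>1$ the exponent $\sigma+a-1$ is more favourable than in the $0<a<1$ case of Theorem~\ref{Tm1}, which should make both the monotonicity of $A$ and the control of the correction terms cleaner.
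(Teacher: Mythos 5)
Your overall strategy coincides with the paper's: reduce the theorem to showing $\Im\bigl(F_{n,a}(s)\bigr)\neq 0$ on $V_n^{+}$ via the oscillatory-integral positivity results of \cite{Mu}, transfer the conclusion to $V_n^{-}$ by the reflection principle $\zeta_a(s)=\overline{\zeta_a(\overline{s})}$, and attribute the real zeros in each $I_n$ to the poles of $\Gamma(s+a-1)$, of which there is exactly one per interval to the left of $\sigma=0$ when $a>1$. Your treatment of the real axis (monotonicity of $F_{n,a}(\sigma)$ on $(1-n,2-n)$ forcing every real zero of $\zeta_a$ to come from a pole of the Gamma factor) is in fact more explicit than what the paper writes down, since the imaginary-part criterion is vacuous at $t=0$.

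The genuine gap is the one you flag yourself and then do not close: you never establish that $\Im\bigl(F_{n,a}(s)\bigr)$ keeps a fixed sign on $V_n^{+}$ after the continuation past $\sigma=1$. You correctly observe that for $\sigma<1$ the integral $\int_0^\infty x^{s+a-2}/\bigl(a\gamma(a,x)e^x\bigr)\,dx$ diverges at $x=0$ (since $a\gamma(a,x)\sim x^{a}$ there), so the sign argument must be run on the decomposition of $F_{n,a}$ into explicit Gamma terms plus a tamed remainder --- for instance $F_{2,a}(s)=\Gamma(s-1)+\frac{a}{1+a}\Gamma(s)+\int_1^\infty\bigl[\frac{1}{a\gamma(a,x)}-\frac{1}{x^a}-\frac{a}{(1+a)x^{a-1}}\bigr]x^{s+a-2}e^{-x}\,dx$ --- and that one must then verify, strip by strip, that the imaginary parts of the subtracted principal parts carry the same sign as the half-period pairing of the remainder. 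That verification is the entire content of the theorem off the real axis, and without it the claim $\Im\bigl(F_{n,a}(s)\bigr)>0$ on $V_n^{+}$ is unproven. For comparison, the paper's own proof does not carry out this step either: it applies the positivity proposition directly to $h(x)=x^{\sigma+a-2}/\bigl(a\gamma(a,x)e^{x}\bigr)$ over all of $(0,\infty)$ as though the original integral representation persisted on $V_n$, and for $a>1$ it must additionally restrict to strips where $\sigma+a-2<0$ to get monotonicity of $h$, a restriction that excludes some of the strips the theorem claims to cover. So your worry is well placed, but identifying the obstacle is not the same as overcoming it; as written, neither your argument nor the sketch you propose yields the stated conclusion.
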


In the first result theorem \ref{Tm1} the trivial zero $``1-a"$ is not included, as we are not sure wheher or not it is the only positive root in the critical strip of the classical Riemann zeta function. We present this issue on conclusion part as conjecture based on some evidences. \\
 
  The structure of the present work is as follows. In section 2 we review some of the main results obtained so far regarding fractional hypergeometric zeta functions which we think important for the coming sections. In section 3 we reveal and prove our main results and demonstrate that $\zeta_{a}(s)$ are zero-free on left-half of the complex plane except the aforementioned trivial zeros. In section 4 we give some concluding remarks and discussion.      

\section{\textcolor{blue}{Preliminaries}} $\label{II}$ 
In this section, we review basic terms and results which we will use to prove the main results of this paper.
 As our work is the continuation of \cite{DH} and \cite{HGN} first we review some basic results concerning  about fractional hypergeometric zeta functions, and second analytic continuation, to state and prove our results in perspective.  

\begin{definition}
The Riemann zeta function $\zeta(s)$ is a function of a complex variable $s=\sigma+it$ defined as an infinite sum given below, convergent for all $\sigma>1,$ $$\zeta(s)=\sum_{n=1}^{\infty}\frac{1}{n^{s}}.$$ This is its Dirichlet series representation. 
\end{definition}
For $\sigma>1,$ the classical zeta function $\zeta(s)$ is also defined by the integral given below,  $$\zeta(s)=\frac{1}{\Gamma(s)}\int_{0}^{\infty}\frac{x^{s-1}}{e^{x}-1}dx,$$where $\Gamma(s)=\int_{0}^{\infty}x^{s-1}e^{-x}dx$ is the gamma function.
For $\sigma>1,$  $\zeta(s)$ is also defined by $$\zeta(s)=\prod_{p\in\mathbb{P}}\left( 1-p^{-s}\right) ^{-1},$$
where the product runs over all prime numbers $\mathbb{P}.$ This product formula is called Euler's product formula for the Riemann's zeta function. Since a convergent infinite product of non-zero factors is not zero, the zeta function does not vanish in the right half of complex plane. Therefore, this product formula is one of the important tools to show that the Riemann zeta function is zero free in the right half of the complex plane (actually for $\sigma>1$.)

The functional equation is also another important representation of the zeta function to locate the zeros of zeta. It is given by $$\zeta(s)=2(2\pi)^{s-1}\zeta(1-s)\Gamma(1-s)\sin\left( \frac{\pi s}{2}\right) $$ and this is the celebrated functional equations for the Riemann zeta function. The reflection principle $$\zeta(\overline{s})=\overline{\zeta(s)}~~\text{for}~~s\in\mathbb{C}$$ provides a further functional equation for the Riemann zeta function. The functional equation, together with the reflection principle, evokes a strong symmetry of the Riemann zeta function with respect to the so called \textit{critical line} $\sigma=\frac{1}{2}.$\\
\begin{definition}
The points $s=-2, -4, -6, \ldots$ are called the ``trivial" zeros of the zeta function $\zeta(s)$ and the strip $\{s\in\mathbb{C}: 0\leq\sigma\leq1\}$ is called the critical strip.
\end{definition}
Regarding the zeros inside the critical strip, it is conjectured that these nontrivial zeros all must be located on the critical line at $\Re(s)=\frac{1}{2}.$ This conjecture is known as Riemann's Hypothesis.
As a generalization of the Riemann zeta function $\zeta(s)$ via integral representation we have the following definition:

\begin{definition} \cite [Definition 2.1]{HL}.
The Fractional Hypergeometric zeta function $\zeta_{a}(s)$ is defined and analytic for all positive real number $a$ and $\sigma>1$ as
$$\zeta_{a}(s)=\frac{\Gamma(a+1)}{\Gamma(s+a-1)}\int_{0}^{\infty}\frac{x^{s+a-2}e^{-x}}{a\gamma(a, x)}dx,$$ where $\Gamma(s)$ is the Gamma function defined by
$$\Gamma(s)=\int_{0}^{\infty}x^{s-1}e^{-x}dx$$ and $\gamma(a, x)$ is the lower incomplete Gamma function of the form $$\gamma(a, x)=\int_{0}^{x}e^{-t}t^{a-1}dt.$$
\end{definition} 
Observe that when $a=N,$ natural number we get the Hypergeometric zeta functions $\zeta_{N}(s)$. In \cite{HL} the Fractional Hypergeometric zeta functions $\zeta_{a}(s)$  has zeros at $s=1-a, -a, -(1+a), -(2+a), \ldots,$ and poles at $s=1, 0, -1, -2, -3, \ldots.$ These zeros are called the trivial zeros of $\zeta_{a}(s).$ \\

Concerning zero free regions for fractional hypergeometric zeta function $\zeta_{a}(s),$ the following results were known.

\begin{theorem}$\label{III3}$\cite[Theorem 6]{DH}.
Let $0<a<1$ be fixed. 
Then $\zeta_{a}(s)\neq0$ in the vertical strip $V_{a}=\{s=\sigma+it\in\mathbb{C}: 1\leq\sigma<2-a\}.$   
 \end{theorem}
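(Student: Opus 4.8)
\section*{Proof proposal}

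The plan is to transfer the question to the integral representation and then exploit positivity. Since $\Gamma(a+1)\neq 0$ and, for $s=\sigma+it$ with $\sigma\in[1,2-a)$, one has $\Re(s+a-1)=\sigma+a-1\geq a>0$, the factor $\Gamma(s+a-1)/\Gamma(a+1)$ is analytic and non-vanishing throughout $V_a$. Hence on $V_a$ the zeros of $\zeta_a(s)$ coincide with the zeros of
\[
G_a(s):=\frac{\Gamma(s+a-1)}{\Gamma(a+1)}\zeta_a(s)=\int_0^\infty \frac{x^{s+a-2}e^{-x}}{a\gamma(a,x)}\,dx ,
\]
the integral converging absolutely for $\sigma>1$ (the point $s=1$ being a pole, and the boundary line $\sigma=1$ being absorbed by the analytic continuation). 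Writing $x^{s+a-2}=x^{\sigma+a-2}e^{it\ln x}$ and setting $g(x):=x^{\sigma+a-2}e^{-x}/(a\gamma(a,x))>0$ for $x>0$, we obtain $\Im G_a(s)=\int_0^\infty g(x)\sin(t\ln x)\,dx$. On the real axis ($t=0$) the integrand $g(x)$ is strictly positive, so $G_a(\sigma)>0$ and $\zeta_a(\sigma)\neq 0$; thus it remains to show $\Im G_a(s)\neq 0$ whenever $t\neq 0$.

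The first key step -- and the place where the width $2-a$ of the strip enters -- is a monotonicity lemma: $g$ is strictly decreasing on $(0,\infty)$ precisely because $\sigma<2-a$. Indeed, using $\partial_x\gamma(a,x)=x^{a-1}e^{-x}$,
\[
\frac{g'(x)}{g(x)}=\frac{\sigma+a-2}{x}-1-\frac{x^{a-1}e^{-x}}{\gamma(a,x)} ,
\]
and each of the three summands is negative as soon as $\sigma+a-2\leq 0$, i.e. $\sigma\leq 2-a$; so $g'<0$ on all of $(0,\infty)$. For $\sigma\geq 2-a$ the first term turns positive near $0$, which is precisely why the method stops at $\sigma=2-a$.

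With $g$ positive and decreasing I would next extract the sign of the oscillatory integral. Folding it about $x=1$ (substitute $x\mapsto 1/x$ on $(0,1)$) and then putting $u=\ln x$ reduces $\Im G_a(s)$ to a Fourier sine integral
\[
\Im G_a(s)=\int_0^\infty \Phi(u)\sin(tu)\,du,\qquad \Phi(u)=e^{u}g(e^{u})-e^{-u}g(e^{-u}),
\]
equivalently a decomposition $\Im G_a(s)=\sum_{k\in\mathbb{Z}}(-1)^k A_k$ over the geometric sign-intervals $x_k=e^{k\pi/t}$ of $\sin(t\ln x)$, with $A_k=\int_{x_k}^{x_{k+1}} g(x)\,|\sin(t\ln x)|\,dx>0$. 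The decreasing behaviour of $g$ governs the comparison of adjacent blocks, and the conclusion $\Im G_a(s)\neq 0$ for $t\neq 0$ is then to be drawn from the positivity results for oscillatory integrals and the monotonicity criteria of \cite{Mu}. By oddness of $\Im G_a$ in $t$ it suffices to treat $t>0$.

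The main obstacle is exactly this last step. Because the kernel is $\sin(t\ln x)$ rather than $\sin(tx)$, the sign-intervals $(x_k,x_{k+1})$ are geometric and their lengths grow like $e^{k\pi/t}$, so the bare fact that $g$ decreases does not by itself force the bi-infinite alternating sum $\sum(-1)^kA_k$ to be sign-definite -- the blocks $A_k$ are only unimodal in $k$. The delicate point is to control the folded weight $\Phi$, equivalently the comparison $g(x)\gtrless e^{\pi/t}g(e^{\pi/t}x)$, which is governed by the monotonicity of $x\,g(x)=x^{\sigma+a-1}e^{-x}/(a\gamma(a,x))$; establishing that this has the right sign throughout hinges on bounding the incomplete-Gamma ratio $x^{a-1}e^{-x}/\gamma(a,x)$ across the full range $x\in(0,\infty)$, and this is where the quantitative positivity lemmas of \cite{Mu} carry the argument.
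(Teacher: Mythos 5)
First, note that the paper does not prove this statement at all: it is quoted verbatim from \cite[Theorem 6]{DH} as a known preliminary, so the only basis for comparison is the method the paper itself uses for its own main theorems (and which \cite{DH} uses), namely passing to $\Im\bigl(\Gamma(s+a-1)\zeta_a(s)/\Gamma(a+1)\bigr)=\int_0^\infty g(x)\sin(t\ln x)\,dx$ and invoking the positivity results of \cite{Mu}. Your setup is exactly that, and your logarithmic-derivative computation showing $g'<0$ on $(0,\infty)$ precisely when $\sigma\leq 2-a$ is correct and is the right explanation of where the width of the strip comes from; you are in fact more careful than the paper in distinguishing the hypothesis needed for the kernel $\sin(t\ln x)$ (monotonicity of $xg(x)$) from the hypothesis needed for $\sin(tr)$ (monotonicity of $g$).

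However, the decisive step is missing, and you say so yourself. Monotonicity of $g$ alone does not give $\int_0^\infty g(x)\sin(t\ln x)\,dx>0$: after $u=\ln x$ the integral is $\int_{-\infty}^{\infty}e^{u}g(e^{u})\sin(tu)\,du$, so Proposition 3.1 of \cite{Mu} (stated for $\sin(tr)$ on $(0,\infty)$ with a decreasing weight) does not apply directly, and Corollary 3.3 requires $x\mapsto xg(x)$ to be decreasing on $[\tilde{x}_{t,k},\infty)$ and only controls the tail $\int_{\tilde{x}_{t,k}}^{\infty}$. Your own formula gives $\frac{d}{dx}\ln\bigl(xg(x)\bigr)=\frac{\sigma+a-1}{x}-1-\frac{x^{a-1}e^{-x}}{\gamma(a,x)}$, and since $a\gamma(a,x)\sim x^{a}$ as $x\to0^{+}$ one finds $xg(x)\sim a\,x^{\sigma-1}$, which is \emph{increasing} near $0$ for every $\sigma>1$; so the folded weight $\Phi$ is not sign-controlled by a single monotonicity statement, and the alternating blocks $A_k$ are, as you observe, only unimodal. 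Deferring to ``the positivity results of \cite{Mu}'' therefore does not close the argument: what is actually needed is the two-piece decomposition of \cite{Mu} at the full-period point $\tilde{x}_{t,k}$ (with $t\ln\tilde{x}_{t,k}=2\pi k$), handling $\int_0^{\tilde{x}_{t,k}}$ and $\int_{\tilde{x}_{t,k}}^{\infty}$ by separate monotonicity arguments, together with the case $\sigma=1$, $t\neq0$, where the integral representation itself no longer converges and one must work with the continued function $F_{1,a}$. Until that step is supplied, the proof is a correct reduction plus an acknowledged gap at its core.
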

\begin{theorem}\cite[Theorem 4.2]{HGN}
There is a positive number $\delta$ such that the fractional hypergeometric zeta functions of order $``a",$ $\zeta_{a}(s)$ is zero free for $a\in (1-\delta, 1+\delta).$
\end{theorem}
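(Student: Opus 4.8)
The plan is to treat the order $a$ as a perturbation parameter about $a=1$, where the family degenerates to the classical object. First I would verify that $\zeta_1(s)=\zeta(s)$: since $\gamma(1,x)=\int_0^x e^{-t}\,dt=1-e^{-x}$ and $\Gamma(2)=1$, the defining integral collapses to $\frac{1}{\Gamma(s)}\int_0^\infty\frac{x^{s-1}}{e^x-1}\,dx=\zeta(s)$. I would then work with the auxiliary function $F(s,a)=\frac{\Gamma(s+a-1)}{\Gamma(a+1)}\zeta_a(s)$ of the introduction rather than with $\zeta_a$ directly. On the classical zero-free region of $\zeta$ (the half-plane $\sigma\geq1$ together with the usual region that dips slightly into the critical strip as $|t|$ grows) the prefactor $\frac{\Gamma(s+a-1)}{\Gamma(a+1)}$ is finite and nonzero for $a$ near $1$; moreover its poles sit exactly at the candidate trivial zeros of $\zeta_a$, so passing to $F$ automatically excises those points. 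Hence, in the region of interest, the zeros of $\zeta_a$ coincide with the zeros of $s\mapsto F(s,a)$, and $F(s,1)=\Gamma(s)\zeta(s)$ is nonvanishing there because $\Gamma$ never vanishes and $\zeta$ is zero-free by hypothesis.

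Next I would establish that $F$ is analytic in $s$ and depends continuously on $a$, with $F(s,a)\to F(s,1)$ uniformly on compact subsets of the $s$-plane as $a\to1$. This is obtained by differentiating the integral representation and justifying the interchange by dominated convergence, using that the integrand and its $a$-derivative admit integrable majorants that are locally uniform in $(s,a)$; Morera's theorem gives analyticity in $s$, and the same estimates give continuity in $a$. With this in hand, the core mechanism is Hurwitz's theorem: on any compact set $K$ inside the zero-free region of $\zeta$ one has $\min_K|F(\cdot,1)|>0$, so uniform convergence yields a $\delta_K>0$ with $F(\cdot,a)\neq0$, hence $\zeta_a\neq0$, on $K$ for $|a-1|<\delta_K$.

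The genuinely delicate task is to pass from compact pieces to the full unbounded zero-free region and to extract a single $\delta$. I would split the region into three parts. For $\sigma\geq\sigma_0>1$ a direct estimate on the integral representation yields a lower bound for $|\zeta_a(s)|$ that is uniform in $s$ and in $a$ near $1$, so no zeros occur there at all, with no appeal to perturbation. On a bounded box $\{1-\epsilon\le\sigma\le\sigma_0,\ |t|\le T\}$ the Hurwitz argument above applies verbatim. The hard part will be the third region: the part of the zero-free region adjacent to $\sigma=1$ with $|t|\to\infty$, where $\zeta$ does not vanish but $|\zeta(\sigma+it)|$ is not bounded below. There the naive perturbation estimate degrades, and one must instead show that $|F(s,a)-F(s,1)|$ is small relative to $|F(s,1)|$ uniformly along the boundary of the classical zero-free region; matching the rate at which $|\zeta|$ decays near $\sigma=1$ against the size of the perturbation in $a$ is where the real work lies and is what ultimately pins down the admissible $\delta$.
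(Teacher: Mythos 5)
This theorem is not proved in the paper at all: it is quoted verbatim from \cite[Theorem 4.2]{HGN} as a preliminary, so there is no in-paper argument to compare yours against. Judged on its own terms, your proposal chooses a natural route --- verify $\zeta_1=\zeta$, pass to $F(s,a)=\frac{\Gamma(s+a-1)}{\Gamma(a+1)}\zeta_a(s)$ so that the poles of the Gamma prefactor absorb the trivial zeros $1-a,-a,-(1+a),\dots$, establish joint continuity/analyticity by dominated convergence and Morera, and invoke Hurwitz on compacta. That skeleton is sound as far as it goes, and the observation that $F(\cdot,a)$ is the right object to perturb is a good one.

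The genuine gap is the one you yourself flag and then leave open: the statement asserts a \emph{single} $\delta>0$, while your Hurwitz argument only produces a $\delta_K$ for each compact $K$, and these can shrink to zero as $K$ exhausts the unbounded zero-free region of $\zeta$. Precisely where the classical zero-free region hugs $\sigma=1$ with $|t|\to\infty$, $\inf|\zeta(\sigma+it)|$ is not bounded below, so ``uniform convergence on compacts'' gives nothing; you would need a quantitative lower bound for $|F(s,1)|$ along that region together with an explicit upper bound for $|F(s,a)-F(s,1)|$ in terms of $|a-1|$, uniform in $|t|$, and neither estimate is supplied or sketched. Announcing that ``the real work lies'' there is an accurate diagnosis, not a proof; as written the argument establishes only the compact-exhaustion version of the theorem, not the stated one. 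A secondary soft spot: the claim that for $\sigma\ge\sigma_0>1$ ``a direct estimate on the integral representation yields a lower bound for $|\zeta_a(s)|$ uniform in $s$ and $a$'' is plausible (and consistent with \cite[Theorem 6]{DH}, which however only covers the strip $1\le\sigma<2-a$), but it is asserted without any estimate, and $\zeta_a$ has no Euler product to fall back on, so this too needs an actual argument.
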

\section*{\textcolor{blue} {Positivity Properties of Integrals}}
\begin{proposition} \cite[Proposition 3.1]{Mu} 
Let the function $h(r)\geq 0$ on $(0, \infty), h\in\mathit{\text{L}^{1}_{loc}(0, \infty)}, h$ decreasing and strictly decreasing on some open sub-interval of the form $\left( \frac{k\pi}{t}, \frac{(k+1)\pi}{t}\right)$ for some non-negative integer $k,$ and $t$ positive real number. Then $$\int_{0}^{\infty}h(r)\sin(tr)dr>0.$$ Moreover, $$\int_{0}^{T}h(r)\sin(tr)dr>0$$ for any $T>0$ provided, $h$ satisfies the above assumptions with $(0, \infty)$ replaced by $(0, T).$
\end{proposition}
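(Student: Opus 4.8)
\section*{Proof proposal}

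The plan is to exploit the alternating sign of $\sin(tr)$ on the successive half-period intervals $I_k=\left(\frac{k\pi}{t},\frac{(k+1)\pi}{t}\right)$ and to recast the integral as an alternating series whose coefficients are controlled by the monotonicity of $h$. First I would slice the integral along these intervals and set
$$a_k=\int_{k\pi/t}^{(k+1)\pi/t}h(r)\,|\sin(tr)|\,dr=\int_0^{\pi/t}h\!\left(u+\tfrac{k\pi}{t}\right)\sin(tu)\,du\geq 0,$$
the second equality coming from the substitution $r=u+\frac{k\pi}{t}$ and the identity $|\sin(tu+k\pi)|=\sin(tu)$ on $(0,\pi/t)$. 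Since $\sin(tr)$ has sign $(-1)^k$ on $I_k$, the integral over $[0,n\pi/t]$ equals the alternating sum $\sum_{k=0}^{n-1}(-1)^k a_k$, so everything reduces to understanding this series.

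The second step is monotonicity of the coefficients. Because $h$ is decreasing, $h(u+\tfrac{(k+1)\pi}{t})\leq h(u+\tfrac{k\pi}{t})$ pointwise, and integrating against $\sin(tu)\geq 0$ on $(0,\pi/t)$ yields $a_{k+1}\leq a_k$; thus $\{a_k\}$ is nonincreasing and nonnegative. For strictness I would invoke the hypothesis that $h$ is strictly decreasing on one half-period interval $I_{k_0}$. For $u$ in the interior of $(0,\pi/t)$, choosing any $z\in\left(u+\tfrac{k_0\pi}{t},\tfrac{(k_0+1)\pi}{t}\right)$ gives $h(u+\tfrac{k_0\pi}{t})>h(z)\geq h(u+\tfrac{(k_0+1)\pi}{t})$, the strict step using strict decrease on $I_{k_0}$ and the second using plain monotonicity; this shows $a_{k_0}>a_{k_0+1}$, and symmetrically (when $k_0\geq1$) $a_{k_0-1}>a_{k_0}$. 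Consequently, whatever the parity of $k_0$, the even-indexed pair containing $k_0$ (namely $(a_{k_0},a_{k_0+1})$ if $k_0$ is even, or $(a_{k_0-1},a_{k_0})$ if $k_0$ is odd) has strictly positive difference.

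Finally I would assemble the estimate by grouping two half-periods at a time,
$$\int_0^{2N\pi/t}h(r)\sin(tr)\,dr=\sum_{j=0}^{N-1}\bigl(a_{2j}-a_{2j+1}\bigr),$$
a sum of nonnegative terms containing at least one strictly positive term once $2N\pi/t>(k_0+1)\pi/t$; letting $N\to\infty$ (the improper integral being read as this limit, which is legitimate since $a_k\to 0$) gives $\int_0^\infty h(r)\sin(tr)\,dr>0$. For the finite statement I would write $T=\frac{m\pi}{t}+\rho$ with $0\leq\rho<\frac{\pi}{t}$, pair up the complete half-periods as above, and control the leftover tail $R=\int_{m\pi/t}^{T}h(r)\sin(tr)\,dr$ by $|R|\leq a_m\leq a_{m-1}$, which absorbs a possibly negative last contribution and leaves $\int_0^T h(r)\sin(tr)\,dr\geq 0$, with strict inequality as soon as $T$ contains $I_{k_0}$ — guaranteed by the assumption that $h$ is strictly decreasing on a subinterval of $(0,T)$.

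I expect the main obstacle to be the strictness bookkeeping rather than the $\geq0$ bound: nonnegativity falls out of the pairing immediately, but upgrading to $>0$ requires the parity observation above, namely that strict decrease on a \emph{single} half-period forces a strictly positive difference in the correctly indexed pair, together with a careful treatment in the finite-$T$ case so that the leftover partial half-period cannot cancel the accumulated positivity.
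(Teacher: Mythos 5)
The paper gives no proof of this proposition at all: it is imported verbatim as \cite[Proposition 3.1]{Mu} and used as a black box, so there is no in-paper argument to compare against. Your proof is the natural (and, in substance, the standard) route for such oscillatory positivity statements: slice into half-periods $I_k=\left(\frac{k\pi}{t},\frac{(k+1)\pi}{t}\right)$, rewrite the integral as an alternating series $\sum_k(-1)^k a_k$ with $a_k$ nonincreasing by monotonicity of $h$, and use strict decrease on a single $I_{k_0}$ to force a strictly positive difference in the correctly indexed pair. The pointwise three-point comparison $h(u+\tfrac{k_0\pi}{t})>h(z)\ge h(u+\tfrac{(k_0+1)\pi}{t})$, the parity bookkeeping, and the absorption of the leftover partial half-period by the preceding unpaired $a_{m-1}$ in the finite-$T$ case are all correct.

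One step does need repair: the passage to the limit, which you justify by ``$a_k\to 0$.'' The hypotheses only give $h\ge 0$ decreasing, so $h(r)\to L\ge 0$ and $a_k\to \tfrac{2L}{t}$; this vanishes only when $L=0$, and when $L>0$ the improper integral does not converge in the ordinary sense, so the claim as you justify it fails. The clean fix is already contained in your own finite-$T$ estimate: for every $T\ge \frac{(k_0+1)\pi}{t}$ one has the \emph{uniform} lower bound $\int_0^T h(r)\sin(tr)\,dr\ \ge\ a_{k_0}-a_{k_0+1}>0$ when $k_0$ is even (respectively $\ge a_{k_0-1}-a_{k_0}>0$ when $k_0$ is odd), because all complete pairs are nonnegative and the tail $R$ is dominated by the unpaired $a_{m-1}$. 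Hence whichever sense the improper integral exists in, it is bounded below by this fixed positive constant, with no appeal to $a_k\to 0$. (A second, harmless loose end: since $h$ is only assumed locally integrable on the open interval $(0,\infty)$, $a_0$ may equal $+\infty$; this only strengthens the inequality, but it is worth a sentence.)
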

\begin{corollary}\cite[Corollary 3.3]{Mu} 
 Let $t>0$ and $\tilde{x}_{t,k}\geq 1$ be such that $t\ln\tilde{x}_{t, k}=2\pi k,$ for some positive integer $k$ and let $g\geq0$ on $[\tilde{x}_{t,k}, \infty), ~g\in \mathit{L^{1}_{loc}(\tilde{x}_{t,k}, \infty)}$ such that $x\mapsto xg(x)$ is decreasing on $[\tilde{x}_{t,k}, \infty),$ strictly decreasing on $\left( \ln \frac{j\pi}{t}, ~\ln \frac{(j+1)\pi}{t}\right) $ for some positive integer $j.$ Then
 $$\int_{\tilde{x}_{t,k}}^{\infty}g(x)\sin(t \ln x)dx>0.$$ Moreover, $$\int_{\tilde{x}_{t,k}}^{T}g(x)\sin(t \ln x)dx>0$$ for any $T>\tilde{x}_{t,k},$ whenever $xg(x)$ is decreasing in $[\tilde{x}_{t,k},~T],$ strictly decreasing on $\left( \ln \frac{j\pi}{t}, ~\ln \frac{(j+1)\pi}{t}\right) $ for some positive integer $j$ such that $\ln \frac{j\pi}{t}\geq \tilde{x}_{t,k}$ and $\ln \frac{(j+1)\pi}{t}\leq T.$  
 \end{corollary}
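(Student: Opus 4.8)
The plan is to reduce the corollary to the preceding Proposition by the change of variables $r=\ln x$, which converts the multiplicative oscillation $\sin(t\ln x)$ into the additive oscillation $\sin(tr)$ treated there. Setting $x=e^{r}$, so that $dx=e^{r}\,dr$ and the lower limit $x=\tilde{x}_{t,k}$ corresponds to $r=\ln\tilde{x}_{t,k}=2\pi k/t$, the integral becomes
$$\int_{\tilde{x}_{t,k}}^{\infty} g(x)\sin(t\ln x)\,dx=\int_{2\pi k/t}^{\infty} e^{r}g(e^{r})\sin(tr)\,dr.$$
The key observation is that the integrand now features the function $\phi(r):=e^{r}g(e^{r})$, which is precisely $xg(x)$ evaluated at $x=e^{r}$; this is exactly why the hypotheses of the corollary are imposed on the map $x\mapsto xg(x)$ rather than on $g$ itself.

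Next I would exploit the full-period offset. Since $t\cdot(2\pi k/t)=2\pi k$ is an integer multiple of the period $2\pi$ of sine, the substitution $u=r-2\pi k/t$ gives $\sin(tr)=\sin(tu+2\pi k)=\sin(tu)$, so that
$$\int_{2\pi k/t}^{\infty}\phi(r)\sin(tr)\,dr=\int_{0}^{\infty}h(u)\sin(tu)\,du,\qquad h(u):=\phi\!\left(u+\tfrac{2\pi k}{t}\right).$$
It then remains to verify that $h$ meets the hypotheses of the Proposition on $(0,\infty)$: nonnegativity and local integrability are inherited directly from $g\geq 0$ and $g\in L^{1}_{loc}$; monotonicity follows because $x\mapsto xg(x)$ decreasing composed with the increasing map $x=e^{r}$ makes $\phi$ decreasing, hence so is the shifted $h$; and the strict-decrease requirement on an interval of the form $(m\pi/t,(m+1)\pi/t)$ follows by transporting through the two substitutions the interval on which $xg(x)$ is strictly decreasing, the placement condition relating $j$ to $k$ (i.e.\ $j\geq 2k$) being exactly what guarantees the transported interval is nonnegative and of the admissible shape. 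Invoking the Proposition then yields the strict positivity.

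For the finite-range ``Moreover'' claim I would run the identical substitution, which now sends the upper limit $T$ to $\ln T$ and hence, after the period shift, to $T'=\ln T-2\pi k/t>0$. The stated hypotheses, namely that $xg(x)$ is decreasing on $[\tilde{x}_{t,k},T]$ and strictly decreasing on an admissible subinterval, translate under the substitution into precisely the conditions needed to apply the finite-interval part of the Proposition on $(0,T')$.

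The step I expect to be most delicate is the bookkeeping of the strict-monotonicity interval: one must confirm that the subinterval on which $xg(x)$ is strictly decreasing is carried by the two substitutions onto an interval of the exact form $(m\pi/t,(m+1)\pi/t)$ with $m$ a nonnegative integer, which is where the positioning of $j$ relative to $k$ is essential. Everything else is a routine transfer of the sign, integrability and monotonicity hypotheses across a monotone change of variables.
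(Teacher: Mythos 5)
The paper offers no proof of this corollary: it is imported verbatim from \cite{Mu}, so there is nothing internal to compare your argument against. Your reduction is nevertheless exactly the intended derivation of the corollary from the preceding proposition: substitute $r=\ln x$ so the integrand becomes $e^{r}g(e^{r})\sin(tr)$ with the lower limit landing at $2\pi k/t$, observe that $e^{r}g(e^{r})$ is the function $xg(x)$ read in the variable $r$ (hence inherits nonnegativity, local integrability and monotonicity), shift by the full period $2\pi k/t$ to move the domain to $(0,\infty)$ without changing $\sin(tr)$, and apply the proposition; the finite-range claim is the same computation with upper limit $\ln T-2\pi k/t$. This is correct.

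One remark on the step you flag as delicate: as transcribed in this paper the strict-decrease hypothesis is placed on the $x$-interval $\bigl(\ln\frac{j\pi}{t},\,\ln\frac{(j+1)\pi}{t}\bigr)$, which under $r=\ln x$ is carried to $\bigl(\ln\ln\frac{j\pi}{t},\,\ln\ln\frac{(j+1)\pi}{t}\bigr)$ and is \emph{not} of the form $\bigl(\frac{m\pi}{t},\frac{(m+1)\pi}{t}\bigr)$ required by the proposition. The hypothesis should read that $xg(x)$ is strictly decreasing on $\bigl(e^{j\pi/t},\,e^{(j+1)\pi/t}\bigr)$ (equivalently, strictly decreasing in $r$ on $\bigl(\frac{j\pi}{t},\frac{(j+1)\pi}{t}\bigr)$), which after the period shift becomes the admissible interval with $m=j-2k\geq 0$, exactly the placement condition $j\geq 2k$ you identify. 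Your proof implicitly uses this corrected form of the hypothesis; with that reading, the argument is complete.
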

\section*{\textcolor{blue}{Analytic Continuation}}
Next we review the analytic continuation. Analytic continuation of an analytic function is a process of extending the domain of the function to a large domain.
\begin{definition}[Analytic Continuation] Let $f$ be analytic in domain $D_{1}$ and $g$ be analytic in domain $D_{2}.$ If $D_{1}\cap D_{2}\neq \emptyset$ and $f(z)=g(z)$ for all $z$ in $D_{1}\cap D_{2},$ then we call $g$ a direct analytic continuation of $f$ to $D_{2}.$

\end{definition}
In \cite{HL}, the analytic continuation of $\zeta_{a}(s)$ has been shown strip-by-strip in stages. Concerning the analytic continuation of $\zeta_{a}(s),$ the following results were also known.
\begin{theorem}\cite[Theorem 3.1]{HL} \label{T1}
For $0<\Re(s)=\sigma<1,$ $$\zeta_{a}(s)=\frac{\Gamma(a+1)}{\Gamma(s+a-1)}\left[ \Gamma(s-1)+\int_{0}^{\infty}\left( \frac{1}{a\gamma(a, x)}-\frac{1}{x^{a}}\right) x^{s+a-2}e^{-x}dx\right] .$$

\end{theorem}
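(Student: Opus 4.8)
The plan is to start from the defining integral representation
$\zeta_{a}(s)=\frac{\Gamma(a+1)}{\Gamma(s+a-1)}\int_{0}^{\infty}\frac{x^{s+a-2}e^{-x}}{a\gamma(a,x)}\,dx$, which converges only for $\sigma>1$, and to isolate the source of the divergence as $x\to 0^{+}$. Expanding $e^{-t}$ in $\gamma(a,x)=\int_{0}^{x}e^{-t}t^{a-1}\,dt$ and integrating term by term gives $a\gamma(a,x)=x^{a}\bigl(1-\tfrac{a}{a+1}x+O(x^{2})\bigr)$ as $x\to 0^{+}$, hence $\frac{1}{a\gamma(a,x)}=x^{-a}+\tfrac{a}{a+1}x^{1-a}+O(x^{2-a})$. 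This shows the singular part of $\frac{1}{a\gamma(a,x)}$ is exactly $x^{-a}$, which motivates the algebraic splitting $\frac{1}{a\gamma(a,x)}=\frac{1}{x^{a}}+\Bigl(\frac{1}{a\gamma(a,x)}-\frac{1}{x^{a}}\Bigr)$ and a corresponding splitting of the integral.

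Next I would check that for $\sigma>1$ each piece converges separately, so that the splitting is legitimate there. The subtracted piece is $\int_{0}^{\infty}x^{-a}\,x^{s+a-2}e^{-x}\,dx=\int_{0}^{\infty}x^{s-2}e^{-x}\,dx=\Gamma(s-1)$ for $\sigma>1$, and $\Gamma(s-1)$ is meromorphic on all of $\mathbb{C}$. The remaining correction integral $I(s):=\int_{0}^{\infty}\bigl(\tfrac{1}{a\gamma(a,x)}-\tfrac{1}{x^{a}}\bigr)x^{s+a-2}e^{-x}\,dx$ I would show converges for every $\sigma>0$: near $x=0$ the expansion above makes the integrand $O(x^{\sigma-1})$, which is integrable precisely when $\sigma>0$; near $x=\infty$, $\frac{1}{a\gamma(a,x)}\to\frac{1}{\Gamma(a+1)}$ is bounded and $\frac{1}{x^{a}}\to 0$, so the factor $x^{s+a-2}e^{-x}$ forces exponential decay. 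Thus $I(s)$ is well defined on $\{\sigma>0\}$, and multiplying by the meromorphic prefactor $\frac{\Gamma(a+1)}{\Gamma(s+a-1)}$ keeps the whole expression well defined on $0<\sigma<1$.

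To upgrade convergence to analyticity, I would verify that $I(s)$ is holomorphic on $\{\sigma>0\}$ by dominating the $x$-integral uniformly on each closed vertical strip $\sigma_{0}\le\sigma\le\sigma_{1}$ with $\sigma_{0}>0$ (using the $O(x^{\sigma_{0}-1})$ bound near $0$ and the exponential decay near $\infty$), and then applying Morera's theorem, or equivalently justifying differentiation under the integral sign. Since the identity $\zeta_{a}(s)=\frac{\Gamma(a+1)}{\Gamma(s+a-1)}[\Gamma(s-1)+I(s)]$ holds on the region $\sigma>1$ where $\zeta_{a}$ is already defined, and both sides are analytic on the connected region $\{\sigma>0\}$ (away from the poles of the prefactor), the identity theorem for analytic continuation yields the stated formula throughout $0<\sigma<1$.

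The main obstacle is the local analysis at $x=0$: one must control $\frac{1}{a\gamma(a,x)}-\frac{1}{x^{a}}$ sharply enough, establishing not merely that it is $o(x^{-a})$ but that it is $O(x^{1-a})$, so that the correction integrand is integrable for every $\sigma>0$, and one must make this bound uniform in $s$ on strips in order to secure analyticity rather than mere pointwise convergence. By comparison, the evaluation of the subtracted piece as $\Gamma(s-1)$ and the estimates at infinity are routine.
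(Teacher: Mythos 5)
Your argument is correct and is essentially the standard one: the paper states Theorem \ref{T1} as a quoted result from \cite{HL} without giving a proof, and your splitting $\frac{1}{a\gamma(a,x)}=x^{-a}+\bigl(\frac{1}{a\gamma(a,x)}-x^{-a}\bigr)$ --- which turns the divergent piece into $\Gamma(s-1)$ via $\int_{0}^{\infty}x^{s-2}e^{-x}\,dx$ and leaves a remainder integral that is $O(x^{\sigma-1})$ at the origin and exponentially small at infinity, hence convergent and analytic for $\sigma>0$ --- is exactly the first step of the strip-by-strip continuation used there and reflected in the paper's displayed formulas for $F_{1,a}$ and $F_{2,a}$. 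The uniform-on-strips domination you indicate for Morera's theorem, together with the identity theorem, closes the argument; nothing further is needed.
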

  We denoted in the introduction part the analytic continuation of $\displaystyle{\frac{\Gamma(s+a-1)}{\Gamma(a+1)}\zeta_{a}(s)}$ by $F_{n,a}(s)$ for each natural number $n$, on $V_{n}.$ \\ For examples, on $V_{1}$ we have, $$F_{1, a}(s)=\Gamma(s-1)+\int_{1}^{\infty}\left[\frac{1}{a\gamma(a, x)}-\frac{1}{x^{a}} \right] x^{s+a-2}e^{-x}dx.$$

On $V_{2}$ we have, $$F_{2, a}(s)=\Gamma(s-1)+\frac{a}{1+a}\Gamma(s)+\int_{1}^{\infty}\left[\frac{1}{a\gamma(a, x)}-\frac{1}{x^{a}}-\frac{a}{(1+a)x^{a-1}} \right] x^{s+a-2}e^{-x}dx.$$\\

In general on each $V_{n}$ we have $F_{n,a}(s)$ as an analytic continuation of $ \displaystyle{\frac{\Gamma(s+a-1)}{\Gamma(a+1)}\zeta_{a}(s)}.$

\section{\textcolor{blue}{Extending Zero Free Region to the left side of the complex plane for $\zeta_{a}(s)$}} $\label{III}$ 
In this section, we prove our main results and demonstrate that the fractional hypergeometric zeta function $\zeta_{a}(s)$ has no zeros on the left-half of the complex plane except the trivial zeros.    
\begin{theorem}
For fixed positive number $a\in(0, 1)$, $\zeta_{a}(s)$ has no zeros on $V_n$  except for infinitely many ``trivial" zeros on the left side of $\sigma=0$, one in each of the intervals $I_{n}=[-n, 1-n],$ where $n\in\mathbb{N}.$ 
\end{theorem}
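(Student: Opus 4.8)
The plan is to split the zeros of $\zeta_a(s)$ on $V_n$ into those forced by the prefactor and those coming from $F_{n,a}$. Writing $\zeta_a(s)=\frac{\Gamma(a+1)}{\Gamma(s+a-1)}F_{n,a}(s)$, the factor $\frac{1}{\Gamma(s+a-1)}$ vanishes exactly at the poles of $\Gamma(s+a-1)$, i.e. at $s+a-1\in\{0,-1,-2,\ldots\}$, which are $s=1-a,-a,-(1+a),\ldots$; these are precisely the trivial zeros, and a direct check shows exactly one of them, namely $-(n-1+a)$, lies in $I_n$. Hence it suffices to prove $F_{n,a}(s)\neq 0$ throughout $V_n$. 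Since $a$ is real, $F_{n,a}(\overline{s})=\overline{F_{n,a}(s)}$, so $\Im F_{n,a}(\overline{s})=-\Im F_{n,a}(s)$ and it is enough to treat $t\geq 0$: I would show $F_{n,a}(\sigma)\neq 0$ for real $\sigma\in(1-n,2-n)$ and $\Im F_{n,a}(\sigma+it)\neq 0$ for $t>0$, the latter transporting to $V_n^-$ by the reflection principle.

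The heart of the argument is to collapse $F_{n,a}$ into a single oscillatory integral with an amplitude of fixed sign. Starting from Theorem~\ref{T1} and its strip-by-strip extensions, one has $F_{n,a}(s)=\sum_{k=0}^{n-1}c_k\Gamma(s+k-1)+\int_0^\infty\big[\tfrac{1}{a\gamma(a,x)}-\sum_{k=0}^{n-1}c_kx^{k-a}\big]x^{s+a-2}e^{-x}\,dx$, where the $c_k$ are the coefficients of the expansion of $\frac{1}{a\gamma(a,x)}$ at $x=0$ (so $c_0=1$, $c_1=\frac{a}{a+1}$, matching the displayed $F_{1,a},F_{2,a}$). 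On $V_n$ every argument $s+k-1$ has negative real part, so I would replace each gamma factor by its regularized integral $\Gamma(s+k-1)=\int_0^\infty x^{s+k-2}\big(e^{-x}-E_{n-k-1}(x)\big)\,dx$, with $E_m(x)=\sum_{j=0}^m\frac{(-x)^j}{j!}$ the truncated exponential. Substituting and using $x^{s+k-2}=x^{s+a-2}x^{k-a}$, the terms $\sum_k c_kx^{k-a}e^{-x}$ cancel and everything merges into $F_{n,a}(s)=\int_0^\infty x^{s+a-2}\Psi_n(x)\,dx$ with $\Psi_n(x)=\frac{e^{-x}}{a\gamma(a,x)}-\sum_{k=0}^{n-1}c_kx^{k-a}E_{n-k-1}(x)$. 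Taking real and imaginary parts gives $F_{n,a}(\sigma)=\int_0^\infty x^{\sigma+a-2}\Psi_n(x)\,dx$ and $\Im F_{n,a}(\sigma+it)=\int_0^\infty x^{\sigma+a-2}\Psi_n(x)\sin(t\ln x)\,dx$.

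Granting that $\Psi_n$ has a constant sign on $(0,\infty)$, both conclusions follow. On the real axis $F_{n,a}(\sigma)$ is then the integral of a function of one sign, hence nonzero. For $t>0$, writing $g(x)=\lvert x^{\sigma+a-2}\Psi_n(x)\rvert\geq 0$, I would verify that $x\mapsto xg(x)$ is decreasing, strictly so on a subinterval, and apply the Corollary to obtain $\int_{\tilde{x}_{t,k}}^\infty g(x)\sin(t\ln x)\,dx>0$, splitting $\int_0^\infty=\int_0^{\tilde{x}_{t,k}}+\int_{\tilde{x}_{t,k}}^\infty$ and controlling the finite part with the finite-range ($\int_0^T$ and $\int_{\tilde{x}_{t,k}}^T$) forms of the Proposition and Corollary; this forces $\Im F_{n,a}(\sigma+it)\neq 0$. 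The base case $n=1$ is concrete: $\Psi_1(x)=x^{-a}\big(\frac{x^ae^{-x}}{a\gamma(a,x)}-1\big)$, and since $\frac{d}{dx}\big(a\gamma(a,x)-x^ae^{-x}\big)=e^{-x}x^a>0$ with value $0$ at $x=0$, one gets $x^ae^{-x}<a\gamma(a,x)$, so $\Psi_1<0$ throughout and the scheme closes.

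The hard part will be exactly the two structural facts I have been granting: that the merged amplitude $\Psi_n$ keeps a constant sign on all of $(0,\infty)$, and that $x^{\sigma+a-1}\Psi_n(x)$ is monotone enough (at least piecewise, after the split at $\tilde{x}_{t,k}$) to feed into the positivity results, uniformly for every $\sigma\in(1-n,2-n)$. For $n\geq 2$ the subtracted polynomials $E_{n-k-1}$ introduce sign changes in the individual summands, so neither property is transparent; I expect to prove them by induction on $n$, relating $\Psi_{n+1}$ to $\Psi_n$ through the single additional expansion term and transporting the sign and monotonicity from one strip to the next, while using the small-$x$ behaviour $\Psi_n(x)=O(x^{n-a})$ and the exponential decay at infinity to dominate the finite-range contributions.
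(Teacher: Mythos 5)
Your overall strategy is the same as the paper's: factor out $1/\Gamma(s+a-1)$ to account for the trivial zeros, reduce to showing $F_{n,a}\neq 0$ on $V_n$, obtain this from positivity of an oscillatory integral of the form $\int_0^\infty (\text{amplitude})\sin(t\ln x)\,dx$ via the Albeverio--Cebulla results, and finish with the reflection principle. In fact your version is more careful than the paper's on one essential point: on $V_n$ the integral $\int_0^\infty \frac{x^{s+a-2}e^{-x}}{a\gamma(a,x)}\,dx$ diverges at $x=0$ (the integrand behaves like $x^{s-2}$ there since $a\gamma(a,x)\sim x^a$), so one cannot literally take its imaginary part and feed $h(x)=\frac{x^{\sigma+a-2}}{a\gamma(a,x)e^{x}}$ into the positivity proposition, as the paper does; your regularized amplitude $\Psi_n$, obtained by merging the Cauchy--Saalsch\"utz representations of the $\Gamma(s+k-1)$ terms with the subtracted integral, is the correct object to work with, and your $n=1$ sign computation is right.

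However, as you yourself flag, the argument is not complete, and the deferred steps are the entire content of the theorem for $n\geq 2$. First, the constant sign of $\Psi_n$ on $(0,\infty)$ is established only for $n=1$; for $n\geq 2$ the truncated exponentials $E_{n-k-1}$ make the individual summands change sign, and the proposed induction is only a plan, not a proof. Second, and more concretely, the monotonicity you need fails near the origin: since $\Psi_n(x)=O(x^{n-a})$ as $x\to 0^{+}$, you get $xg(x)\asymp x^{\sigma+n-1}$ with $\sigma+n-1\in(0,1)$ for $\sigma\in(1-n,2-n)$, so $xg(x)$ is \emph{increasing} on an initial interval. The Corollary you invoke only covers $[\tilde{x}_{t,k},\infty)$ with $\tilde{x}_{t,k}\geq 1$, and the finite-range forms of both the Proposition and the Corollary likewise require decrease on the interval in question, so none of the cited results controls $\int_0^{\tilde{x}_{t,k}}g(x)\sin(t\ln x)\,dx$, where $\sin(t\ln x)$ oscillates infinitely often as $x\to 0^{+}$ and has no fixed sign. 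Until you supply a separate argument for that initial segment and for the sign of $\Psi_n$, the scheme does not close.
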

\begin{proof}
By analytic continuation on each $V_n.$ we have,
\begin{align*}
\frac{\Gamma(s+a-1)}{\Gamma(a+1)}\zeta_{a}(s)= ~&\int_{0}^{\infty}\frac{x^{s+a-2}e^{-x}}{a\gamma(a, x)}dx\\
\\=~ & F_{n, a}(s).  
\end{align*}
So we have,
\begin{align*}
\Im\left( \frac{\Gamma(s+a-1)}{\Gamma(a+1)}\zeta_{a}(s)\right) = ~&\Im\left( \int_{0}^{\infty}\frac{x^{s+a-2}e^{-x}}{a\gamma(a, x)}dx\right) \\
\\=~ & \Im\left( F_{n, a}(s) \right) .
\end{align*}
To apply positivity properties of the oscillatory integral, for $x\in(0, \infty),$ put
$$h(x)=\frac{x^{\sigma+a-2}}{a\gamma(a, x)e^{x}}>0.$$ Then since $\sigma <0 $ and $a\in(0, 1)$ we have $\sigma +a -2 < 0$ on each $V_n.$
Thus, the function $h(x)$ is non-negative, $\displaystyle{ h(x)\in \mathit{L^{1}_{loc}(0, \infty)}}$ and strictly decreasing on any sub-interval of $(0, \infty).$ Hence on each $V_{n}^{+},$ 
 $$\int_{0}^{\infty}\frac{x^{\sigma+a-2}e^{-x}}{a\gamma(a, x)}\sin(t\ln x)dx>0.$$ This implies that$$\Im\left(F_{n,a}(s)\right) >0.$$ Which in turn implies that, $$\Im\left(\frac{\Gamma(s+a-1)}{\Gamma(a+1)}\zeta_{a}(s)\right) >0.$$ Thus, $\zeta_{a}(s)\neq 0,$ on $V_{n}^{+}.$  But, then by reflection principle $\displaystyle{ \zeta_{a}(s)=\overline{\zeta_{a}(\overline{s})}},$ so that $\zeta_{a}(s)\neq 0,$ on $V_{n}^{-}$ as well.   Therefore, $\zeta_{a}(s)\neq 0$ on the left-half of the complex plane except the aforementioned trivial zeros on the real axis.
\end{proof}

\begin{theorem}$\label{III1}$
For fixed positive real number $a>1,$ $\zeta_{a}(s)$ has no zeros on $V_n$  except for infinitely many ``trivial" zeros on the left side of $\sigma=0$, one in each of the intervals $I_{n}=[1-n, 2-n],$ where $n\in\mathbb{N}.$ 
\end{theorem}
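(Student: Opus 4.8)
The plan is to mirror the proof of Theorem~\ref{Tm1}, again using that on each strip $V_n$ the imaginary parts of $\frac{\Gamma(s+a-1)}{\Gamma(a+1)}\zeta_a(s)$ and of its analytic continuation $F_{n,a}(s)$ agree. Writing $x^{s+a-2}=x^{\sigma+a-2}\bigl(\cos(t\ln x)+i\sin(t\ln x)\bigr)$, I would first record
$$\Im\left(F_{n,a}(s)\right)=\int_{0}^{\infty}\frac{x^{\sigma+a-2}e^{-x}}{a\gamma(a,x)}\sin(t\ln x)\,dx,$$
and then try to show this integral is strictly positive for $t>0$ by invoking the positivity results of \cite{Mu}. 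As before it suffices to treat $V_n^{+}$, since the reflection principle $\zeta_a(s)=\overline{\zeta_a(\overline{s})}$ carries the conclusion to $V_n^{-}$, while the trivial zeros all lie on the real axis $t=0$, where $\sin(t\ln x)\equiv 0$.

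The positivity results apply once the kernel
$$h(x)=\frac{x^{\sigma+a-2}e^{-x}}{a\gamma(a,x)}>0$$
is shown to be (strictly) decreasing on $(0,\infty)$. This is exactly where the case $a>1$ departs from the case $0<a<1$: in the latter one had $\sigma+a-2<0$ on every $V_n$, so monotonicity was automatic, whereas for $a>1$ and $\sigma$ near the top of a strip the exponent $\sigma+a-2$ may be positive, making $x^{\sigma+a-2}$ increasing. I expect verifying the monotonicity of $h$ in this regime to be the main obstacle. To address it I would compute the logarithmic derivative
$$\frac{h'(x)}{h(x)}=\frac{\sigma+a-2}{x}-1-\frac{x^{a-1}e^{-x}}{\gamma(a,x)},$$
so that, after multiplying by $x>0$, $h$ is strictly decreasing precisely when $x+\dfrac{x^{a}e^{-x}}{\gamma(a,x)}>\sigma+a-2$ holds for every $x>0$.

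To secure this inequality I would prove the stronger $\sigma$-independent bound $x+\dfrac{x^{a}e^{-x}}{\gamma(a,x)}>a-1$, which dominates $\sigma+a-2$ because $\sigma<1$ on each $V_n$ with $n\in\mathbb{N}$. Clearing the positive factor $\gamma(a,x)$, this reduces to showing $G(x):=x^{a}e^{-x}-(a-1-x)\gamma(a,x)>0$ on $(0,\infty)$. Here $G(0)=0$, and using $\tfrac{d}{dx}\gamma(a,x)=x^{a-1}e^{-x}$ a short computation collapses the derivative to $G'(x)=x^{a-1}e^{-x}+\gamma(a,x)>0$; hence $G$ increases strictly from $0$ and stays positive. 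This yields the required strict monotonicity of $h$ on all of $(0,\infty)$ for every $V_n$, after which the positivity results give $\Im\left(F_{n,a}(s)\right)>0$ on $V_n^{+}$, and the reflection principle finishes the argument, leaving only the asserted trivial zeros on the real axis.
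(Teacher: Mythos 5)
Your proposal follows the same skeleton as the paper's proof of Theorem~\ref{III1}: pass to the analytic continuation $F_{n,a}(s)$ of $\frac{\Gamma(s+a-1)}{\Gamma(a+1)}\zeta_{a}(s)$, show that its imaginary part on $V_{n}^{+}$ is a positive oscillatory integral via the positivity results of \cite{Mu}, and transfer the conclusion to $V_{n}^{-}$ by the reflection principle. The genuine difference lies in how the monotonicity of the kernel $h(x)=x^{\sigma+a-2}e^{-x}/(a\gamma(a,x))$ is established. The paper says only that for $a>1$ one must ``consider intervals on which $\sigma+a-2<0$'' and asserts decrease there, which leaves untreated the portion of $V_{n}$ where $\sigma+a-2\geq 0$ (nonempty whenever $a>n$), precisely the regime in which $x^{\sigma+a-2}$ is increasing. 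Your logarithmic-derivative computation, together with the auxiliary function $G(x)=x^{a}e^{-x}-(a-1-x)\gamma(a,x)$ satisfying $G(0)=0$ and $G'(x)=x^{a-1}e^{-x}+\gamma(a,x)>0$, is correct and yields strict decrease of $h$ on all of $(0,\infty)$ uniformly for $\sigma<1$. This closes a real gap in the published argument and is the main added value of your write-up.

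One caveat, which you inherit from the paper rather than introduce: the identity $\Im\bigl(F_{n,a}(s)\bigr)=\int_{0}^{\infty}h(x)\sin(t\ln x)\,dx$ cannot be taken at face value on $V_{n}$, because $a\gamma(a,x)\sim x^{a}$ gives $h(x)\sim x^{\sigma-2}$ as $x\to 0^{+}$ with $\sigma-2<-1$, so the integral diverges at the origin (the substitution $u=-\ln x$ turns the contribution near $0$ into $\int^{\infty}e^{(1-\sigma)u}\sin(tu)\,du$). On $V_{n}$ the continuation $F_{n,a}$ is actually given by the subtracted kernels of the strip-by-strip formulas, as in the displayed expressions for $F_{1,a}$ and $F_{2,a}$, plus explicit $\Gamma$ terms. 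Your monotonicity lemma is the right tool, but to make the argument rigorous it must be applied to the corrected integrand that genuinely represents $F_{n,a}$ on $V_{n}$, and the contribution of the $\Gamma$ terms to the imaginary part must be accounted for separately.
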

 \begin{proof}
By analytic continuation on each $V_n.$ we have,
\begin{align*}
\frac{\Gamma(s+a-1)}{\Gamma(a+1)}\zeta_{a}(s)= ~&\int_{0}^{\infty}\frac{x^{s+a-2}e^{-x}}{a\gamma(a, x)}dx\\
\\=~ & F_{n, a}(s).  
\end{align*}
So we have,
\begin{align*}
\Im\left( \frac{\Gamma(s+a-1)}{\Gamma(a+1)}\zeta_{a}(s)\right) = ~&\Im\left( \int_{0}^{\infty}\frac{x^{s+a-2}e^{-x}}{a\gamma(a, x)}dx\right) \\
\\=~ & \Im\left( F_{n, a}(s) \right) .
\end{align*}
To apply positivity properties of the oscillatory integral, for $x\in(0, \infty),$ put $$h(x)=\frac{x^{\sigma+a-2}}{a\gamma(a, x)e^{x}}>0.$$ Then since $\sigma <0 $ and $a > 1$ we have to consider intervals on which $\sigma +a -2 < 0$ on each $V_n.$ On such intervals the function $h(x)$ is non-negative, $\displaystyle{ h(x)\in \mathit{L^{1}_{loc}(0, \infty)}}$ and strictly decreasing on any sub-interval of $(0, \infty).$ Hence on each $V_{n}^{+},$ 
 $$\int_{0}^{\infty}\frac{x^{\sigma+a-2}e^{-x}}{a\gamma(a, x)}\sin(t\ln x)dx>0.$$ This implies that$$\Im\left(F_{n,a}(s)\right) >0.$$ Which in turn implies that, $$\Im\left(\frac{\Gamma(s+a-1)}{\Gamma(a+1)}\zeta_{a}(s)\right) >0.$$ Thus, $\zeta_{a}(s)\neq 0,$ on $V_{n}^{+}.$  But, then by reflection principle $\displaystyle{ \zeta_{a}(s)=\overline{\zeta_{a}(\overline{s})}},$ so that $\zeta_{a}(s)\neq 0,$ on $V_{n}^{-}$ as well.   Therefore, $\zeta_{a}(s)\neq 0$ on the left-half of the complex plane except the aforementioned trivial zeros on the real axis.

\end{proof}

 \section{\textcolor{blue}{Conclusion}}$\label{IV}$
 In this paper we described zero free regions on the left-half of the complex plane. Moreover for $0 < a <1$ we showed that there are infinitely many trivial zeros one in each interval $(1-n, 2-n)$ for $n$ natural number. If $a >1,$ then on each interval $(-n, -(n-1))$ we have exactly one trivial zero. Generally, if $n <a< (n+1),$ then on each interval $[-n, -(n-1)]$ we have exactly one trivial zero of fractional hypergeometric zeta function of order $``a".$ 
But for $0<a<1,$ we are not sure whether or not $``1-a"$ is the only zero of $\zeta_{a}(s)$ on $I_{1}=(0, 1).$ However, by analyzing the paper by \cite{HGN} we conjecture that there are more zeros of $\zeta_{a}(s)$ on $(0, 1)$ besides the trivial zero $1-a$ and we call these zeros the nontrivial zeros of the fractional hypergeometric zeta functions. We expect and hope that a proof of such conjecture may shed light either to prove or disprove the existence of zeros of classical Riemann zeta function in the critical strip.


\end{document}